\documentclass[11pt,reqno]{amsproc}
\usepackage{amsmath}
\usepackage{amssymb}
\usepackage[colorlinks=true]{hyperref}
\hypersetup{urlcolor=red, citecolor=blue}

\usepackage{etoolbox}
\makeatletter
\patchcmd{\@maketitle}{\global\topskip8pc\relax}{\global\topskip4pc\relax}{}{}
\makeatother

\theoremstyle{plain}
\newtheorem*{mainthm}{Theorem}
\newtheorem*{maincor}{Corollary}
\newtheorem{theorem}{Theorem}[section]

\newtheorem{lemma}[theorem]{Lemma}

\theoremstyle{definition}

\numberwithin{equation}{section}

\def\N{\mathbb N}
\def\R{\mathbb R}
\def\Q{\mathbb Q}
\def\m{\mathcal M}
\def\e{\mathcal E}
\def\phi{\varphi}
\def\epsilon{\varepsilon}
\def\LC{LC}

\begin{document}

\title[Convergence of equilibrium measures at temperature zero]{An elementary proof of Br\'emont's theorem on the convergence of
equilibrium measures at temperature zero}

\author{Oliver Jenkinson and Mark Pollicott}

\address{Oliver Jenkinson; School of Mathematical Sciences, Queen
  Mary University of London, Mile End Road, London, E1 4NS, UK.
  \newline {\tt o.jenkinson@qmul.ac.uk}}

\address{Mark Pollicott;
Mathematics Institute, University of Warwick, Coventry, CV4 7AL, UK.
\newline
  {\tt m.pollicott@warwick.ac.uk}}

\begin{abstract}
We give an elementary proof that if $f$ is a locally constant function on
a topologically mixing subshift of finite type, then the equilibrium
measures for $tf$ converge as $t\to\infty$, a result due to Br\'emont.
The proof shows that the limit measure assigns an
algebraic number to every cylinder.
\end{abstract}

\maketitle

\vspace{-2em}

\section*{Introduction}

Let $\sigma:X\to X$ be a topologically mixing subshift of finite type,
and let $f:X\to\R$ be continuous.
For $t\in\R$, an equilibrium measure for $tf$ is a $\sigma$-invariant
Borel probability measure $m$ maximizing $h(m)+t\int f\,dm$, where $h(m)$
denotes entropy; if $f$ is sufficiently regular (e.g.~locally constant)
then for all $t$ there is a unique such measure, denoted $m_{tf}$.
In the thermodynamic interpretation (cf.~\cite{ruellebook}) the parameter
$t$ is an inverse temperature, so the behaviour of $m_{tf}$ as $t\to\infty$
is a zero temperature limit, and an accumulation point of the $m_{tf}$ can
be interpreted as a ground state.
It is known (see Lemma~\ref{lem:extremal}) that all such accumulation
points maximize $\int f\,dm$, and have maximal entropy among the measures
with this property; in particular, if only one maximizing measure has
maximal entropy then $m_{tf}$ converges to it as $t\to\infty$.
In general the limit need not exist: Chazottes \& Hochman
\cite{chazotteshochman} constructed Lipschitz functions $f$ for which
$m_{tf}$ does not converge, and Coronel \& Rivera-Letelier
\cite{coronelriveraletelier} showed that the low temperature behaviour of
$m_{tf}$ can depend sensitively on $f$ (see also \cite{vanenterruszel}).

If $f$ is locally constant then $m_{tf}$ does converge as $t\to\infty$.
This was proved by Br\'emont \cite{bremont},
 who observed that the measure
of each cylinder is a semialgebraic function of the entries of the
transfer matrix of $tf$, which are either $0$ or of the form $e^{\gamma t}$
with $\gamma$ a value of $f$, and deduced
convergence from a general result on subanalytic functions evaluated
along curves of the form $x\mapsto(x^{\epsilon_1},\ldots,x^{\epsilon_n})$
as $x\to0$; this result was proved using the preparation theorem for
subanalytic functions (see \cite{lionrolin,parusinski}).
His approach also shows that, for functions $f$ depending on a fixed
number of coordinates, only finitely many limit measures can arise.
A dynamical proof was given by
Leplaideur \cite{leplaideur}, an algorithmic proof based on
renormalization by Chazottes, Gambaudo \& Ugalde \cite{cgu}, and an
algorithm of Puiseux type describing the limit, for transition graphs
with few vertices, by Garibaldi \& Thieullen \cite{garibaldithieullen}.
The difficulty lies in the case where the maximizing measures of maximal
entropy are not unique: the limit is then a convex combination of the
ergodic ones, and the particular convex combination is not determined by
the set of maximizing measures alone.

In this note we give a short and elementary proof of Br\'emont's theorem.
Our starting point is a standard formula (Lemma~\ref{lem:formula}),
obtained by implicit differentiation of the reciprocal of a dynamical
zeta function, which expresses $m_{tf}(g)$, for locally constant $g$, as a
rational function of $e^{-P(tf)}$ whose coefficients are finite linear
combinations of exponentials $e^{\gamma t}$ with $\gamma\in\R$; here $P$
denotes topological pressure.
A direct asymptotic analysis of this formula is complicated by
cancellation: its numerator and denominator both tend to zero whenever
there is more than one maximizing measure of maximal entropy.
Instead we note that $e^{-P(tf)}$ is itself a zero of a polynomial whose
coefficients are exponential sums of the same kind.
The key point is that eliminating $e^{-P(tf)}$ from these two relations
shows that $m_{tf}(g)$ is also a zero of such a polynomial, with
coefficients not all identically zero; in other words, $m_{tf}(g)$ is
algebraic over the field generated by the functions $t\mapsto e^{\gamma t}$.
Br\'emont's theorem then follows from the elementary observation
(Lemma~\ref{lem:bounded}) that a bounded continuous function on $\R$ with
this property has a limit at infinity.
This observation takes the place of the preparation theorem in
Br\'emont's approach; alternatively, since $t\mapsto m_{tf}(g)$ is
definable in the real exponential field, 
Wilkie's theorem that this
structure is o-minimal \cite{wilkie} implies that $t\mapsto m_{tf}(g)$ is
eventually monotone, and therefore convergent since it is bounded.
The proof also shows that the limit measure assigns an algebraic
number to every cylinder, even when $f$ takes transcendental values, with an
explicit bound on its degree (see the Corollary in Section~\ref{sec:proof}).

\subsection*{Acknowledgments}
We are grateful to Thierry Bousch for helpful comments on a preliminary version of this article.

\section{Preliminaries and statement of result}\label{sec:prelim}

In this section we fix notation, recall some known facts, and state
Br\'emont's theorem.

Let $A$ be a $k\times k$ matrix with entries in $\{0,1\}$, and let $X$ be
the set of sequences $x=(x_n)_{n=1}^\infty$ such that
$x_n\in\{1,\ldots,k\}$ and $A(x_n,x_{n+1})=1$ for all $n\ge1$.
We assume that $X$ is non-empty, and equip it with the product topology,
so that $X$ is compact and the shift map $\sigma:X\to X$, defined by
$(\sigma x)_n:=x_{n+1}$, is continuous.
The dynamical system $(X,\sigma)$ is a \emph{subshift of finite type}, and
we assume throughout that it is \emph{topologically mixing}, i.e.~there
exists $n\ge1$ such that $A^n(i,j)>0$ for all $1\le i,j\le k$.
A \emph{cylinder} of length $n$ is a non-empty set of the form
$[w_1\cdots w_n]:=\{x\in X: x_i=w_i \text{ for } 1\le i\le n\}$.
Let $C(X)$ denote the space of continuous functions $X\to\R$, equipped
with the uniform norm $\|\cdot\|_\infty$.
A function $\phi:X\to\R$ is \emph{locally constant} if there exists
$n\ge1$ such that $\phi$ is constant on all cylinders of length $n$;
the set $\LC(X)$ of locally constant functions is a dense subspace of
$C(X)$.

Let $\m$ denote the set of $\sigma$-invariant Borel probability measures
on $X$, which is compact in the weak-$*$ topology, and let $h(m)$ denote
the entropy of $m\in\m$.
For $\phi\in C(X)$, its \emph{pressure} is
$P(\phi):=\sup\{h(m)+m(\phi):m\in\m\}$, where $m(\phi):=\int\phi\,dm$,
and any $m\in\m$ satisfying
$P(\phi)=h(m)+m(\phi)$ is called an \emph{equilibrium measure} for $\phi$.
The entropy map $m\mapsto h(m)$ is upper semi-continuous on $\m$
(see \cite{walters}), so all $\phi\in C(X)$ have at least one equilibrium
measure, and all $\phi\in\LC(X)$ have a unique equilibrium measure
\cite[Cor.~3.3 (i)]{walterstams}, denoted $m_\phi$.

For $f\in C(X)$ let $\beta(f):=\max\{m(f):m\in\m\}$.
Any $m\in\m$ satisfying $m(f)=\beta(f)$ is called an
\emph{$f$-maximizing measure} (see \cite{bochi,jsurvey1,jsurvey2} for background).
The set $\m_{\max}(f)$ of $f$-maximizing measures is a non-empty compact
convex subset of $\m$, and we set
$h_{\max}(f):=\max\{h(m):m\in\m_{\max}(f)\}$.
Any $m\in\m_{\max}(f)$ with $h(m)=h_{\max}(f)$ is called an
\emph{$f$-extremal measure} (cf.~\cite{coelho,jetds,pollicottsharp}).
The following known result describes the possible accumulation points of
$m_{tf}$ as $t\to\infty$.

\begin{lemma}\label{lem:extremal}
Let $f\in\LC(X)$.
As $t\to\infty$, all weak-$*$ accumulation points of $m_{tf}$ are
$f$-extremal measures, and $P(tf)-t\beta(f)$ decreases to the limit
$h_{\max}(f)$.
\end{lemma}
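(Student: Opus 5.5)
The plan is to work with the function $\psi(t):=P(tf)-t\beta(f)$ and get both conclusions from convexity of pressure and upper semi-continuity of entropy. By the variational principle,
\[
\psi(t)=\sup_{m\in\m}\bigl(h(m)+t(m(f)-\beta(f))\bigr).
\]
Each term $m(f)-\beta(f)$ is $\le0$, so each function $t\mapsto h(m)+t(m(f)-\beta(f))$ is affine and non-increasing. A supremum of non-increasing functions is non-increasing, so $\psi$ is non-increasing. Restricting the supremum to $m\in\m_{\max}(f)$ gives $\psi(t)\ge h_{\max}(f)$ for all $t$. Hence the limit $L:=\lim_{t\to\infty}\psi(t)$ exists and $L\ge h_{\max}(f)$.

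Next I study an accumulation point $\mu$ of $m_{tf}$, say $m_{t_jf}\to\mu$ with $t_j\to\infty$. Using $m=m_{t_jf}$ in the formula above gives
\[
\psi(t_j)=h(m_{t_jf})+t_j\bigl(m_{t_jf}(f)-\beta(f)\bigr).
\]
The left side is at least $h_{\max}(f)\ge0$, and $h(m_{t_jf})\le h_{\mathrm{top}}(\sigma)<\infty$. Therefore
\[
0\le \beta(f)-m_{t_jf}(f)\le h_{\mathrm{top}}(\sigma)/t_j\to0.
\]
Since $f$ is continuous, $m_{t_jf}(f)\to\mu(f)$, so $\mu(f)=\beta(f)$ and $\mu\in\m_{\max}(f)$.

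For the entropy, drop the non-positive second term to get $h(m_{t_jf})\ge\psi(t_j)\ge L$. Upper semi-continuity of $h$ then gives
\[
h(\mu)\ge\limsup_j h(m_{t_jf})\ge L\ge h_{\max}(f).
\]
Since $\mu$ is maximizing, $h(\mu)\le h_{\max}(f)$. So equality holds throughout: $\mu$ is $f$-extremal and $L=h_{\max}(f)$. This requires at least one accumulation point to exist, which follows from weak-$*$ compactness of $\m$.

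The main point needing care is identifying the limit $L$ with $h_{\max}(f)$. The lower bound $L\ge h_{\max}(f)$ is free. The upper bound comes only through passing the entropy of the equilibrium measures to a limit measure, which is where upper semi-continuity is essential. The remaining issue is the word "decreases": monotonicity is non-strict in general, and this is enough. Local constancy of $f$ is used only to speak of the unique equilibrium measure $m_{tf}$. For general continuous $f$ the same argument applies to any choice of equilibrium measures.
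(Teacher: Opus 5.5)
Your proof is correct. The paper gives no argument of its own for this lemma. It simply defers to the literature (Conze--Guivarc'h, Morris's Theorem~1, Coelho, Contreras--Lopes--Thieullen, Jenkinson).

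What you have written is the standard compact-space proof, built only from ingredients the paper already records in Section~1: the variational principle, weak-$*$ compactness of $\m$, and upper semi-continuity of entropy. So it makes that section self-contained. As you observe, it applies verbatim to any continuous $f$ and any selection of equilibrium measures. You also correctly invoke compactness of $\m$ to obtain an accumulation point before concluding $L=h_{\max}(f)$.

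What the citation of Morris buys instead is the countable-alphabet setting. There $\m$ is not compact and entropy need not be upper semi-continuous, or even finite. Your two decisive steps, extracting a limit measure and passing the bound $h(m_{t_jf})\ge L$ to it, would then require additional tightness and entropy estimates.

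Your remark on ``decreases'' is also right. If $f$ is cohomologous to a constant then $P(tf)-t\beta(f)\equiv h_{\mathrm{top}}(\sigma)$, so only non-strict monotonicity holds in general. For locally constant $f$, the real analyticity of $t\mapsto P(tf)$ (Lemma~\ref{lem:formula}) shows that this function is either strictly decreasing or constant.
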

\begin{proof}
The result seems to have first appeared in the unpublished manuscript
\cite{conzeguivarch}; see \cite[Thm.~1]{morris} for a proof in a more
general (countable alphabet) setting.
The first part of the lemma appears in \cite{coelho}, and published
versions appear in \cite[Prop.~29 (i)]{clt} and \cite[Lem.~14]{jetds}.
\end{proof}

In particular, if $f$ has a unique extremal measure then
$\lim_{t\to\infty}m_{tf}$ exists.
If $f\in\LC(X)$ then there is a subshift of finite type $Y\subseteq X$
such that a measure in $\m$ is $f$-maximizing if and only if its support
is contained in $Y$ (see \cite[Prop.~10]{bouschwalters},
\cite[Thm.~7]{jtams}, \cite{ziemian}).
The $f$-extremal measures are then the measures of maximal entropy for
$\sigma|_Y$, and since an irreducible subshift of finite type has a unique
measure of maximal entropy (see e.g.~\cite{walters}), $f$ has a unique
extremal measure if and only if exactly one irreducible component of $Y$
has topological entropy $h_{\max}(f)$.
If at least two components have this entropy, Lemma~\ref{lem:extremal}
gives only that all accumulation points of $m_{tf}$ are convex
combinations of their measures of maximal entropy.

The following result is due to Br\'emont \cite{bremont}; we give a new
proof in Sections~\ref{sec:formula} and~\ref{sec:proof}.

\begin{mainthm}
Let $(X,\sigma)$ be a topologically mixing subshift of finite type, let
$f:X\to\R$ be locally constant, and for $t\in\R$ let $m_{tf}$ denote the
unique equilibrium measure for $tf$.
Then $m_{tf}$ converges in the weak-$*$ topology as $t\to\infty$.
\end{mainthm}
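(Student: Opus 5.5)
We follow the strategy outlined in the introduction. After recoding, we may assume $f$ depends on two coordinates, so $f(x)=F(x_1,x_2)$. The reduction is to pass to a higher block presentation; it preserves mixing, and each locally constant $g$ becomes a function of a single coordinate. By density of $\LC(X)$ in $C(X)$ and compactness of $\m$, weak-$*$ convergence of $m_{tf}$ follows once $\lim_{t\to\infty}m_{tf}(g)$ exists for every locally constant $g$. Indeed, if $m$ and $m'$ are two accumulation points agreeing on all $g\in\LC(X)$, then $m=m'$. So we fix $g\in\LC(X)$, which we may take to be a function of $x_1$, and let $u(t):=m_{tf}(g)$.

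\textbf{Step 1 (formula).} Introduce the transfer matrix $M_{t,s}(i,j)=A(i,j)e^{tF(i,j)+sg(i)}$. Its spectral radius is $e^{P(tf+sg)}$, and it is a simple eigenvalue by Perron--Frobenius and mixing. Set $D(z;t,s):=\det(I-zM_{t,s})$. This is a polynomial in $z$ whose coefficients are finite sums $\sum c_\gamma e^{\gamma t+\delta s}$. The quantity $z_0(t)=e^{-P(tf)}$ is a simple zero of $D(\cdot;t,0)$. Implicit differentiation in $s$ at $s=0$, using $\partial_sP(tf+sg)|_{s=0}=m_{tf}(g)$, gives
\[
u(t)=\frac{z_0\,\partial_s D(z_0;t,0)}{-z_0^{\,2}\,\partial_z D(z_0;t,0)}\cdot(\text{sign conventions}).
\]
More precisely, $u(t)\,B(z_0,t)=C(z_0,t)$, where $B$ and $C$ are polynomials in $z$ with coefficients in the ring $\mathcal E$ of exponential sums $\sum c_\gamma e^{\gamma t}$. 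Simplicity of the zero gives $B(z_0,t)\neq0$.

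\textbf{Step 2 (elimination).} Let $K$ be the fraction field of $\mathcal E$. Since the functions $e^{\gamma t}$ for distinct $\gamma$ are linearly independent, $\mathcal E$ is an integral domain and $K$ is a field of real-analytic functions. The function $z_0$ is a root of the polynomial $D(z;t,0)\in\mathcal E[z]$, which is nonzero because its constant term is $1$. Hence $z_0$ is algebraic over $K$. Then $u=C(z_0)/B(z_0)$ lies in the field $K(z_0)$, so $u$ is algebraic over $K$. Concretely, the resultant $\mathrm{Res}_z\bigl(D(z),\,yB(z)-C(z)\bigr)$ is a polynomial $Q(y,t)=\sum_{j\le N}a_j(t)y^j$ with $a_j\in\mathcal E$ not all zero, and $Q(u(t),t)=0$. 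One must check that the resultant does not vanish identically; this follows from the field-theoretic argument, since we may take the minimal polynomial of $u$ over $K$ and clear denominators.

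\textbf{Step 3 (bounded algebraic functions converge).} This is the main obstacle, and corresponds to the paper's Lemma on bounded functions. Here $u$ is continuous and bounded, with values in $[\min g,\max g]$, and satisfies $\sum_j a_j(t)u(t)^j=0$. We may assume $Q$ is irreducible, hence squarefree, over $K$. First divide through by the dominant exponential among all the $a_j$. Each $a_j(t)e^{-\gamma_{\max}t}$ then converges as $t\to\infty$ to a constant $\alpha_j$, and not all $\alpha_j$ vanish. So the polynomial $Q(\cdot,t)$, after normalization, converges coefficientwise to the nonzero polynomial $\sum\alpha_j y^j$.

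Normalization alone is not enough: if this limit polynomial has several roots in $[\min g,\max g]$, then $u$ could oscillate among them. To rule this out, use the discriminant $\Delta(t)\in\mathcal E$ of $Q$. It is not identically zero, and a nonzero exponential sum has only finitely many real zeros; for instance, count them by induction on the number of terms via Rolle's theorem. So for large $t$ the roots of $Q(\cdot,t)$ are simple and vary continuously. Then $u$, being continuous, stays on a single real root branch $r(t)$.

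It remains to show that each bounded real branch converges. We argue that $r$ is eventually monotone. Indeed, $r'(t)=-\partial_tQ/\partial_yQ$ along the branch. So a zero of $r'-0$ is a common root of $Q$ and $\partial_tQ$, and elimination again gives an exponential sum $R(t)=\mathrm{Res}_y(Q,\partial_tQ)$ vanishing there. If $R\not\equiv0$, it has finitely many zeros, so $r'$ eventually has constant sign. If $R\equiv0$, then $Q$ and $\partial_tQ$ share a factor, and by irreducibility $Q$ divides $\partial_tQ$. Comparing leading exponential growth then forces the roots to be constant; alternatively, one replaces $\partial_tQ$ by a suitable combination to handle this degenerate case. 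In either case $r$ is bounded and eventually monotone, hence convergent.

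This gives the existence of $\lim_{t\to\infty}m_{tf}(g)$ for every $g\in\LC(X)$, and therefore the theorem.
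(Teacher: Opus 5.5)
Your proof is correct. Steps 1 and 2 are essentially the paper's Lemma~\ref{lem:formula} and its elimination argument. The paper makes precise the ambient field you need in order to form $K(z_0)$: it is the fraction field of the ring of real-analytic functions on $\R$, and your $K$ sits inside it.

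The genuine difference is in Step 3, and there your remark that ``normalization alone is not enough'' is mistaken. After dividing by the dominant exponential, every accumulation value of $u(t)$ as $t\to\infty$ is a root of the nonzero limit polynomial $\sum_j\alpha_jy^j$. Since $u$ is bounded, it has at least one accumulation value, so this polynomial is nonconstant and has finitely many roots. Since $u$ is continuous, the intermediate value theorem makes every point of $[\liminf u,\limsup u]$ an accumulation value. That interval therefore lies in a finite set, so it is a single point. Oscillation between two roots would force every intermediate value to be a root as well, which is impossible. This is the whole of the paper's Lemma~\ref{lem:bounded}, and it needs no discriminant, no root-branch analysis and no resultant.

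Your detour through the discriminant, a single real root branch, and $\mathrm{Res}_y(Q,\partial_tQ)$ is nonetheless valid. The finiteness of zeros of nonzero exponential sums, the branch-selection argument and the sign argument for $r'$ all check out. The degenerate case is cleaner than ``comparing exponential growth'': $Q\mid\partial_tQ$ gives $a_j'=c\,a_j$ for all $j$, hence $(a_j/a_D)'=0$, so the roots are constant.

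What your route buys is a stronger conclusion: $t\mapsto m_{tf}(g)$ is eventually monotone. The paper obtains this only by citing Wilkie's o-minimality theorem. What it costs is length and extra machinery. The paper's argument also identifies the limit directly as a root of the limit polynomial, which is exactly what yields algebraicity and the degree bound in the Corollary.
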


\section{A formula for $m_{tf}(g)$}\label{sec:formula}

In this section we fix $f,g\in\LC(X)$, and express $m_{tf}(g)$ in terms of
a zero of a polynomial whose coefficients are finite linear combinations
of exponentials in $t$.
Let $r\ge2$ be such that $f$ and $g$ are constant on all cylinders of
length $r$.
By passing to the higher block presentation of $X$ by words of length
$r-1$ (see e.g.~\cite[p.~12]{lindmarcus}) we may, and do, assume that $X$
is the subshift of finite type defined by a primitive $N\times N$ matrix
$B$ with entries in $\{0,1\}$, where $N$ is the number of cylinders of
length $r-1$ in the original subshift, and that $f$ and $g$ depend only on
the first two coordinates; we write $f(i,j)$ and $g(i,j)$ for their values
on the cylinder $[ij]$.
For $t,u\in\R$, define the $N\times N$ matrix $B_{t,u}$ by
$B_{t,u}(i,j):=e^{tf(i,j)+ug(i,j)}$ if $B(i,j)=1$, and
$B_{t,u}(i,j):=0$ if $B(i,j)=0$, and set
\begin{equation*}
\Delta(z,t,u):=\det(I-zB_{t,u}) \quad\text{for } z\in\R\,.
\end{equation*}
The matrix $B_{t,u}$ is primitive, and its spectral radius, which is a
simple eigenvalue, is equal to $e^{P(tf+ug)}$ (see
\cite{parrypollicott,ruellebook}).
We note that
$1/\Delta(z,t,u)=\exp\sum_{n\ge1}\frac{z^n}{n}\sum_{\sigma^nx=x}e^{S_n(tf+ug)(x)}$
for $|z|<e^{-P(tf+ug)}$, where $S_n\phi:=\sum_{i=0}^{n-1}\phi\circ\sigma^i$,
so that $1/\Delta(\cdot,t,u)$ is the dynamical zeta function of $tf+ug$
(see e.g.~\cite[pp.~27--28]{baladi}).

\begin{lemma}\label{lem:formula}
For all $t\in\R$, the number $z(t):=e^{-P(tf)}$ is a simple zero of
$\Delta(\cdot,t,0)$, and
\begin{equation}\label{eq:formula}
m_{tf}(g)\,z(t)\,\partial_z\Delta(z(t),t,0)=\partial_u\Delta(z(t),t,0)\,.
\end{equation}
Moreover, the functions $t\mapsto z(t)$ and $t\mapsto m_{tf}(g)$ are real
analytic on $\R$.
\end{lemma}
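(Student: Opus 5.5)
The plan is to work entirely with the matrices $B_{t,u}$, which are primitive, and to use Perron--Frobenius theory together with the derivative of pressure.

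\textbf{Step 1: $z(t)$ is a simple zero.} Let $\lambda(t,u):=e^{P(tf+ug)}$ be the spectral radius of $B_{t,u}$. Then
\[
\Delta(z,t,u)=\prod_{i=1}^N(1-z\lambda_i),
\]
where $\lambda_1=\lambda(t,u),\lambda_2,\ldots,\lambda_N$ are the eigenvalues counted with algebraic multiplicity. Since $\lambda(t,u)$ is a simple eigenvalue, the factor $1-z\lambda$ occurs exactly once. The only other factors that could vanish at $z=1/\lambda$ would come from eigenvalues $\lambda_i=\lambda$ with $i\ge2$, and simplicity excludes these. Hence $z=1/\lambda(t,u)$ is a simple zero of $\Delta(\cdot,t,u)$, and in particular $\partial_z\Delta(1/\lambda,t,u)\ne0$. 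Taking $u=0$ gives the first claim.

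\textbf{Step 2: analyticity.} The function $\Delta$ is a polynomial in $z$ and in the exponentials $e^{tf(i,j)+ug(i,j)}$, so it is real analytic in $(z,t,u)$. Since $\partial_z\Delta\ne0$ at $(1/\lambda(t,u),t,u)$, the analytic implicit function theorem shows that $(t,u)\mapsto \zeta(t,u):=e^{-P(tf+ug)}$ is real analytic near every point. Here one uses that the simple zero depends continuously on the parameters and so stays equal to the Perron root branch, which is continuous. In particular $z(t)=\zeta(t,0)$ is real analytic on $\R$.

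\textbf{Step 3: the formula.} Differentiate the identity $\Delta(\zeta(t,u),t,u)=0$ in $u$ at $u=0$:
\[
\partial_z\Delta\cdot\partial_u\zeta+\partial_u\Delta=0.
\]
Now $\partial_u\zeta=-\zeta\,\partial_uP(tf+ug)|_{u=0}$. By the standard derivative-of-pressure formula, $\partial_uP(tf+ug)|_{u=0}=m_{tf}(g)$. This gives
\[
m_{tf}(g)\,z(t)\,\partial_z\Delta=\partial_u\Delta,
\]
which is \eqref{eq:formula}.

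\textbf{Step 4: analyticity of $m_{tf}(g)$.} Solving the formula gives
\[
m_{tf}(g)=\frac{\partial_u\Delta(z(t),t,0)}{z(t)\,\partial_z\Delta(z(t),t,0)}.
\]
This is a quotient of analytic functions with nonvanishing denominator, so it is real analytic on $\R$.

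\textbf{Main obstacle.} The one genuinely nontrivial input is the derivative formula $\frac{d}{du}P(tf+ug)|_{u=0}=m_{tf}(g)$. I would cite it from \cite{parrypollicott,ruellebook}.

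If a self-contained argument is wanted, two routes are available. The first is convexity of $P$, which gives that every equilibrium measure is a tangent functional. Combined with differentiability of $u\mapsto P(tf+ug)$ from Step 2 and uniqueness of $m_{tf}$, this forces the derivative to equal $m_{tf}(g)$. The second is a direct Perron--Frobenius computation. Let $v$ and $w$ be left and right Perron eigenvectors of $B_{t,0}$, with $\lambda=\lambda(t,0)$. Then
\[
\partial_u\lambda=\frac{\sum_{i,j}v_iB_{t,0}(i,j)g(i,j)w_j}{v\cdot w},
\]
and $\partial_u\lambda/\lambda$ is the integral of $g$ against the Parry-type Markov measure. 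That Markov measure is the equilibrium measure $m_{tf}$.
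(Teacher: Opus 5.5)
Your proposal is correct and follows essentially the same route as the paper. Simplicity of the Perron eigenvalue gives the simple zero. The implicit function theorem, combined with continuity of $(t,u)\mapsto e^{-P(tf+ug)}$, gives analyticity. Differentiating $\Delta(z(t,u),t,u)=0$ in $u$ and using the cited identity $\partial_uP(tf+ug)|_{u=0}=m_{tf}(g)$ gives the formula. The only cosmetic difference is that you read off analyticity of $m_{tf}(g)$ from the quotient form of the formula, whereas the paper uses $m_{tf}(g)=-\partial_u z(t,0)/z(t)$.
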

\begin{proof}
For $t,u\in\R$, set $z(t,u):=e^{-P(tf+ug)}$.
We have $\Delta(z,t,u)=\prod_{i=1}^N(1-z\lambda_i)$, where
$\lambda_1,\ldots,\lambda_N$ are the eigenvalues of $B_{t,u}$, repeated
according to algebraic multiplicity.
Since $e^{P(tf+ug)}$ is a simple eigenvalue, it follows that $z(t,u)$ is
a simple zero of $\Delta(\cdot,t,u)$.
The coefficients of the polynomial $\Delta(\cdot,t,u)$ are real analytic
functions of $(t,u)$, and $z(t,u)$ depends continuously on $(t,u)$, so
$(t,u)\mapsto z(t,u)$ is real analytic by the implicit function theorem.
Differentiating the identity $\Delta(z(t,u),t,u)=0$ with respect to $u$
gives
$\partial_z\Delta(z(t,u),t,u)\,\partial_u z(t,u)+\partial_u\Delta(z(t,u),t,u)=0$.
Now $\partial_uP(tf+ug)|_{u=0}=m_{tf}(g)$ (see e.g.~\cite[p.~133]{ruellebook}),
so $\partial_uz(t,0)=-m_{tf}(g)\,z(t)$, and (\ref{eq:formula}) follows.
In particular, $m_{tf}(g)=-\partial_uz(t,0)/z(t)$ is real analytic in $t$.
\end{proof}

The coefficients of $\Delta$ have an explicit form.
Let $\e$ denote the set of functions $q:\R\to\R$ of the form
$q(t)=\sum_{k=1}^K b_ke^{\gamma_kt}$, where $K\in\N$ and
$b_k,\gamma_k\in\R$ for $1\le k\le K$, and let $\e_\Q$ denote the set of
those $q\in\e$ admitting such a representation with $b_1,\ldots,b_K\in\Q$;
clearly $\e$ and $\e_\Q$ are closed under addition and multiplication.
Expanding the determinant as a sum over subsets $J\subseteq\{1,\ldots,N\}$
and permutations $\pi$ of $J$ gives
$\Delta(z,t,u)=1+\sum_{n=1}^Nc_n(t,u)\,z^n$, where each $c_n(t,u)$ is a
finite sum of terms $\pm e^{tf_{J,\pi}+ug_{J,\pi}}$, with
$f_{J,\pi}:=\sum_{i\in J}f(i,\pi(i))$ and
$g_{J,\pi}:=\sum_{i\in J}g(i,\pi(i))$, for some $J$ of cardinality $n$ and
some permutation $\pi$ of $J$ such that $B(i,\pi(i))=1$ for all $i\in J$.
In particular, for $1\le n\le N$ we have
\begin{gather}
c_n(\cdot,0)\in\e \quad\text{and}\quad \partial_uc_n(\cdot,0)\in\e\,,
\label{eq:coeffs}\\
c_n(\cdot,0)\in\e_\Q \quad\text{and}\quad \partial_uc_n(\cdot,0)\in\e_\Q
\quad\text{if $g$ takes values in $\Q$}\,.
\label{eq:coeffsQ}
\end{gather}

\section{Proof of the Theorem}\label{sec:proof}

We now complete the proof of the Theorem, using the fact that
$e^{-P(tf)}$ is a zero of the polynomial $\Delta(\cdot,t,0)$, whose
coefficients belong to $\e$ by (\ref{eq:coeffs}).
The only further ingredient needed is the following elementary lemma.

\begin{lemma}\label{lem:bounded}
Let $y:\R\to\R$ be continuous and bounded.
Suppose there exist $D\in\N$ and $q_0,\ldots,q_D\in\e$, not all
identically zero, such that
\begin{equation}\label{eq:algrel}
\sum_{j=0}^D q_j(t)\,y(t)^j=0 \quad\text{for all } t\in\R\,.
\end{equation}
Then $\lim_{t\to\infty}y(t)$ exists.
If, moreover, $q_0,\ldots,q_D\in\e_\Q$, then $\lim_{t\to\infty}y(t)$ is an
algebraic number of degree at most $D$.
\end{lemma}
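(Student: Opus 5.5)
We study the two-variable exponential polynomial
\[
F(t,y):=\sum_{j=0}^D q_j(t)\,y^j .
\]
Write each $q_j(t)=\sum_k b_{jk}e^{\gamma_{jk}t}$ with distinct exponents and nonzero coefficients, discarding the $q_j$ that vanish identically. Terms that cancel identically are combined, so each nonzero $q_j$ has a well-defined leading exponent $\gamma_j^*$ and leading coefficient $b_j^*\neq0$. Then
\[
q_j(t)=b_j^*e^{\gamma_j^*t}\bigl(1+o(1)\bigr).
\]
For a fixed real number $y$, the map $t\mapsto F(t,y)$ is itself in $\e$. We use two basic facts. First, a nonzero element of $\e$ has only finitely many real zeros; this follows by induction on the number of terms, dividing by one exponential and differentiating (Rolle). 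Second, its sign is eventually constant.

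\textbf{Plan.} Suppose $\lim_{t\to\infty}y(t)$ does not exist. Since $y$ is bounded and continuous, $L:=\liminf y<\limsup y=:U$. By the intermediate value theorem, every value $c\in(L,U)$ is attained by $y$ at arbitrarily large times. Consider the polynomial in the $y$-variable
\[
F(t,c)=\sum_j q_j(t)c^j\in\e .
\]
We want to find some $c\in(L,U)$ with $F(\cdot,c)\not\equiv0$. Then $F(\cdot,c)$ has only finitely many zeros, which contradicts $F(t_n,c)=0$ along a sequence $t_n\to\infty$ with $y(t_n)=c$.

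To find such a $c$, group $F$ by exponent:
\[
F(t,c)=\sum_{\gamma} e^{\gamma t}\,p_\gamma(c),
\]
where each $p_\gamma$ is a real polynomial of degree at most $D$ and the sum runs over the finitely many exponents that occur. Not all $q_j$ vanish, so some $p_\gamma$ is a nonzero polynomial. A nonzero polynomial has at most $D$ roots, so we can choose $c\in(L,U)$ with $p_\gamma(c)\neq0$. For that $c$, the exponentials $e^{\gamma t}$ with distinct $\gamma$ are linearly independent, so $F(\cdot,c)\not\equiv0$. This gives the contradiction, and hence the limit $\ell$ exists. The only step with real content is the finite-zeros fact for $\e$, and that Rolle induction is standard. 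Note that boundedness is needed to make $L$ and $U$ finite; otherwise the argument shows only that a limit exists in $[-\infty,\infty]$.

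\textbf{Algebraicity.} Now assume all $q_j\in\e_\Q$. Let $\gamma_0$ be the largest exponent with $p_{\gamma_0}\not\equiv0$; the coefficients of $p_{\gamma_0}$ are rational. Divide the relation by $e^{\gamma_0 t}$. The exponents $\gamma<\gamma_0$ give terms $e^{(\gamma-\gamma_0)t}p_\gamma(y(t))$, which tend to $0$ because $y$ is bounded. Larger exponents do not occur. Letting $t\to\infty$ and using continuity of $p_{\gamma_0}$ gives
\[
p_{\gamma_0}(\ell)=0 .
\]
Here $p_{\gamma_0}$ is a nonzero polynomial with rational coefficients and degree at most $D$. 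Hence $\ell$ is algebraic of degree at most $D$. A nonzero constant polynomial $p_{\gamma_0}$ is impossible, since it would force $0=p_{\gamma_0}\neq0$.

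The main obstacle is purely bookkeeping: organising the relation by exponent rather than by power of $y$, so that a single nonzero polynomial $p_{\gamma}$ both avoids an interval and captures the dominant asymptotics.
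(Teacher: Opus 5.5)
Your proof is correct. For existence it takes a somewhat different route from the paper, and your algebraicity step is essentially the paper's argument: your $p_{\gamma_0}$ is exactly the paper's leading polynomial $Q(\xi)=\sum_j b_{j,1}\xi^j$.

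\textbf{How the paper argues.} It uses only $Q$. Dividing the relation by $e^{\gamma_1 t}$ and passing to the limit along any sequence $t_i\to\infty$ with $y(t_i)\to y^*$ shows that every subsequential limit $y^*$ is a root of $Q$. By continuity these subsequential limits fill $[\liminf y,\limsup y]$, so that interval lies in the finite zero set of $Q$. This one argument gives existence and at the same time identifies the limit as a root of $Q$, so the algebraicity statement comes for free.

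\textbf{How you argue.} You evaluate the relation exactly at the times where $y(t)=c$, for a level $c\in(L,U)$ chosen to avoid the roots of one nonzero $p_\gamma$. This reduces the problem to the one-variable function $F(\cdot,c)\in\e$. You then need a fact about zeros of nonzero elements of $\e$, and afterwards a second pass to get algebraicity. The Rolle induction is more than you need. Your second fact, that a nonzero element of $\e$ is eventually dominated by its leading term and so has no zeros for large $t$, already contradicts $F(t_n,c)=0$ with $t_n\to\infty$. So both routes ultimately rest on the same leading-term asymptotics. The overall shape is also the same: continuity forces an interval of bad values, while the algebra allows only finitely many.

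\textbf{Trade-offs.} The paper's version is more economical, since one limiting argument yields both conclusions. Yours shows slightly more, namely that any value attained at arbitrarily large times is a common root of all the $p_\gamma$. It also gives $L=U$ in $[-\infty,\infty]$ without using boundedness.

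\textbf{One point to make explicit.} You assert that the coefficients of $p_{\gamma_0}$ are rational. This needs the grouped-by-exponent representation to be obtained by collecting terms in a representation with rational coefficients. The grouped representation is unique because exponentials with distinct exponents are linearly independent, so this is legitimate. The paper states this step explicitly.
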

\begin{proof}
Collecting terms with equal exponents, and discarding those exponents for
which all coefficients vanish, we may write
\begin{equation*}
q_j(t)=\sum_{k=1}^K b_{j,k}\,e^{\gamma_kt}\qquad\text{for } 0\le j\le D\,,
\end{equation*}
where $\gamma_1>\gamma_2>\cdots>\gamma_K$ and, for all $1\le k\le K$, the
coefficients $b_{0,k},\ldots,b_{D,k}$ are not all zero; here $K\ge1$,
since the $q_j$ are not all identically zero.
The polynomial
\begin{equation*}
Q(\xi):=\sum_{j=0}^D b_{j,1}\,\xi^j
\end{equation*}
is therefore non-zero, and $e^{-\gamma_1t}q_j(t)\to b_{j,1}$ as
$t\to\infty$, for $0\le j\le D$.
If $t_i\to\infty$ and $y(t_i)$ converges to some $y^*\in\R$, then multiplying
(\ref{eq:algrel}) by $e^{-\gamma_1t}$, setting $t=t_i$, and letting
$i\to\infty$, gives
\begin{equation*}
0=\lim_{i\to\infty}\sum_{j=0}^D e^{-\gamma_1t_i}q_j(t_i)\,y(t_i)^j
=\sum_{j=0}^D b_{j,1}\,(y^*)^j=Q(y^*)\,.
\end{equation*}
Since $y$ is bounded, at least one such $y^*$ exists, so $Q$ is not
constant, and its set of zeros is finite.
Set $\underline y:=\liminf_{t\to\infty}y(t)$ and
$\overline y:=\limsup_{t\to\infty}y(t)$.
By continuity of $y$, all points of the interval
$[\underline y,\overline y]$ are of the form $\lim_{i\to\infty}y(t_i)$
for some sequence $t_i\to\infty$, so this interval is contained in the
finite set of zeros of $Q$.
Therefore $\underline y=\overline y$, so $\lim_{t\to\infty}y(t)$ exists,
and it is a zero of $Q$.
If $q_0,\ldots,q_D\in\e_\Q$ then the above representations can be obtained
by collecting terms in representations with rational coefficients, so the
$b_{j,k}$ are rational.
In this case $Q$ is a non-zero polynomial of degree at most $D$ with
rational coefficients, so its zero $\lim_{t\to\infty}y(t)$ is an algebraic
number of degree at most $D$.
\end{proof}

We are now able to prove the Theorem.

\begin{proof}[Proof of Theorem]
Since $\LC(X)$ is dense in $C(X)$, it suffices to show that the limit of
$m_{tf}(g)$ as $t\to\infty$ exists for all $g\in\LC(X)$.
Fix $g\in\LC(X)$, let $B$, $N$, $\Delta$ and $c_n$ be as in
Section~\ref{sec:formula}, and for $t\in\R$ set
\begin{equation*}
y(t):=m_{tf}(g) \quad\text{and}\quad z(t):=e^{-P(tf)}\,.
\end{equation*}
By Lemma~\ref{lem:formula}, $y$ and $z$ are real analytic, and
$|y(t)|\le\|g\|_\infty$ for all $t\in\R$.

The rest of the proof uses only elementary algebra.
Let $\mathcal A$ denote the ring of real analytic functions $\R\to\R$.
If $a,b\in\mathcal A$ satisfy $ab=0$ and $a\ne0$, then $b$ vanishes outside
the zero set of $a$, which is discrete, so $b=0$ by continuity; in other
words, $\mathcal A$ is an integral domain.
Consequently (see \cite[\S7.5, Thm.~15]{dummitfoote})
$\mathcal A$ is contained in a field $\mathcal K$, its field of fractions,
whose elements are the quotients $a/b$ with $a,b\in\mathcal A$ and $b\ne0$,
added and multiplied according to the usual rules for fractions, and with
$a/b=a'/b'$ if and only if $ab'=a'b$.
(Thus $a/b$ may be regarded as the function $t\mapsto a(t)/b(t)$, defined
outside the discrete zero set of $b$.)
Since $\e$ is closed under addition and multiplication, the quotients
$p/q$ with $p,q\in\e$ and $q\ne0$ form a subfield $F$ of $\mathcal K$; in
particular, $\mathcal K$ is a vector space over $F$.

By (\ref{eq:coeffs}), the formulae
\begin{equation*}
\Phi(\xi):=1+\sum_{n=1}^Nc_n(\cdot,0)\,\xi^n\,,\quad
\Psi_1(\xi):=\sum_{n=1}^Nn\,c_n(\cdot,0)\,\xi^n\,,\quad
\Psi_2(\xi):=\sum_{n=1}^N\partial_uc_n(\cdot,0)\,\xi^n
\end{equation*}
define polynomials with coefficients in $\e$, and hence in $F$.
Evaluating them at $\xi=z$ gives the elements of $\mathcal A$ defined, for
$t\in\R$, by
\begin{equation*}
\Phi(z)(t)=\Delta(z(t),t,0)\,,\quad
\Psi_1(z)(t)=z(t)\,\partial_z\Delta(z(t),t,0)\,,\quad
\Psi_2(z)(t)=\partial_u\Delta(z(t),t,0)\,,
\end{equation*}
so Lemma~\ref{lem:formula} gives
\begin{equation}\label{eq:zy}
\Phi(z)=0 \quad\text{and}\quad y\,\Psi_1(z)=\Psi_2(z)\,,
\end{equation}
where $\Psi_1(z)$ has no zeros, since $z(t)$ is a positive simple zero of
$\Delta(\cdot,t,0)$ for all $t\in\R$.

We now show that the $N+1$ elements $1,y,\ldots,y^N$ lie in a vector
space over $F$ of dimension at most $N$, and are therefore linearly
dependent over $F$.
Let $d$ be the degree of $\Phi$, i.e.~the largest $n$ such that
$c_n(\cdot,0)$ is not identically zero; such $n$ exists, and $1\le d\le N$,
because $\Phi(z)=0$ while the constant term of $\Phi$ is $1$.
Let $V$ denote the vector space over $F$ spanned by $1,z,\ldots,z^{d-1}$,
i.e.~the set of elements $p_0+p_1z+\cdots+p_{d-1}z^{d-1}$ of $\mathcal K$
with $p_0,\ldots,p_{d-1}\in F$; its dimension is at most $d$, and hence at
most $N$.
Since $c_d(\cdot,0)$ is a non-zero element of the field $F$, the identity
$\Phi(z)=0$ can be rearranged as
\begin{equation*}
z^d=-c_d(\cdot,0)^{-1}\bigl(1+c_1(\cdot,0)\,z+\cdots+c_{d-1}(\cdot,0)\,z^{d-1}\bigr)\,,
\end{equation*}
so $z^d\in V$.
It follows that $zv\in V$ for all $v\in V$, and hence, by induction, that
$z^k\in V$ for all $k\ge0$, so $V$ is closed under multiplication.
Moreover, if $w\in V$ is non-zero then $v\mapsto wv$ defines an $F$-linear
map from $V$ to $V$ which is injective, since $\mathcal K$ is a field, and
therefore surjective, since $V$ is finite-dimensional; choosing $v\in V$
with $wv=1$ shows that $1/w\in V$.
The elements $\Psi_1(z)$ and $\Psi_2(z)$ belong to $V$, being polynomial
expressions in $z$ with coefficients in $F$, and $\Psi_1(z)\ne0$, so the
second identity in (\ref{eq:zy}) shows that $y=\Psi_2(z)/\Psi_1(z)$
belongs to $V$.
As $V$ is closed under multiplication, the $N+1$ elements $1,y,\ldots,y^N$
belong to $V$, so they are linearly dependent over $F$: there exist
$p_0,\ldots,p_N\in F$, not all zero, such that $\sum_{j=0}^Np_jy^j=0$.
Multiplying by the product of the denominators of the $p_j$ gives
$q_0,\ldots,q_N\in\e$, not all identically zero, such that
\begin{equation*}
\sum_{j=0}^Nq_j(t)\,y(t)^j=0 \quad\text{for all } t\in\R\,,
\end{equation*}
since $\sum_{j=0}^Nq_jy^j$ belongs to $\mathcal A$.
This is (\ref{eq:algrel}) with $D=N$, so Lemma~\ref{lem:bounded} implies
that $\lim_{t\to\infty}y(t)$ exists.
(In the language of field theory, $V=F(z)$, and $z$ and $y$ are algebraic
over $F$ of degree at most $N$; cf.~\cite[\S13.2, Prop.~12]{dummitfoote}.)
\end{proof}

The proof of the Theorem also yields
the following:

\begin{maincor}
Let $(X,\sigma)$ be a topologically mixing subshift of finite type, let
$f:X\to\R$ be locally constant, and for $t\in\R$ let $m_{tf}$ denote the
unique equilibrium measure for $tf$.
Then $\lim_{t\to\infty}m_{tf}(C)$ is an algebraic number for all
cylinders $C\subseteq X$.
Moreover, if $r\ge2$ is such that $f$ is constant on all cylinders of
length $r$, and $C\subseteq X$ is a cylinder of length at most $r$, then
the degree of $\lim_{t\to\infty}m_{tf}(C)$ is at most the number of
cylinders of length $r-1$.
\end{maincor}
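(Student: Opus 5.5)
We apply the proof of the Theorem with $g:=\chi_C$, the indicator function of the cylinder $C$, so that $m_{tf}(C)=m_{tf}(g)$. The aim is to produce the relation (\ref{eq:algrel}) with coefficients in $\e_\Q$ and $D$ equal to the number of cylinders of length $r-1$; Lemma~\ref{lem:bounded} then gives both algebraicity and the degree bound.

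\emph{The degree bound.} Assume first that $C$ has length $\ell\le r$. Then $g$ is constant on cylinders of length $r$. The function $f$ is also constant on cylinders of length $r$, so in Section~\ref{sec:formula} we may use this same $r$. We pass to the higher block presentation by words of length $r-1$. This presentation has $N$ equal to the number of cylinders of length $r-1$ in the original subshift, and both $f$ and $g$ depend on two coordinates. The function $g$ takes only the values $0$ and $1$, which are rational. So by (\ref{eq:coeffsQ}) we have $c_n(\cdot,0)\in\e_\Q$ and $\partial_uc_n(\cdot,0)\in\e_\Q$. The integer multiplier $n$ in $\Psi_1$ preserves this.

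We then repeat the linear algebra of the proof of the Theorem, replacing $F$ by the subfield $F_\Q$ of $\mathcal K$ consisting of the quotients $p/q$ with $p,q\in\e_\Q$ and $q\ne0$. This is a field because $\e_\Q$ is closed under addition and multiplication. The degree $d$ of $\Phi$ is at least $1$ exactly as before. The space $V$ spanned over $F_\Q$ by $1,z,\ldots,z^{d-1}$ is closed under multiplication and inverses, and it contains $y$. Hence $1,y,\ldots,y^N$ are linearly dependent over $F_\Q$. Clearing denominators, which are products of elements of $\e_\Q$ and so lie in $\e_\Q$, gives $q_0,\ldots,q_N\in\e_\Q$, not all identically zero, satisfying (\ref{eq:algrel}) with $D=N$. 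The function $y$ is continuous and bounded by $1$. So the second part of Lemma~\ref{lem:bounded} shows that the limit is algebraic of degree at most $N$, as claimed.

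\emph{General cylinders.} Now let $C$ have length $\ell>r$. The function $f$ is also constant on cylinders of length $r':=\ell$. Running the argument above with $r'$ in place of $r$ shows that the limit is algebraic, with degree at most the number of cylinders of length $\ell-1$. This gives the first assertion.

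The only point needing care is the verification that the whole elimination argument stays within $\e_\Q$. In particular, $c_d(\cdot,0)^{-1}$ and the inverse of $\Psi_1(z)$ must be taken inside $F_\Q$ rather than $F$. This holds because every coefficient of $\Phi$, $\Psi_1$ and $\Psi_2$ lies in $\e_\Q$, and all subsequent operations are field operations over $F_\Q$. Existence of the limit itself is already given by the Theorem.
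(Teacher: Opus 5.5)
Your proposal is correct and follows essentially the same route as the paper. You take $g=\chi_C$ with $r$ large enough that $C$ has length at most $r$, note via (\ref{eq:coeffsQ}) that all coefficients lie in $\e_\Q$, rerun the elimination over the field of fractions of $\e_\Q$, and apply the second part of Lemma~\ref{lem:bounded}; your separate case $\ell>r$ is just an explicit version of the paper's choice of $r$.
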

\begin{proof}
Let $C\subseteq X$ be a cylinder, let $g$ be the indicator function of
$C$, and let $r\ge2$ be such that $f$ is constant on all cylinders of
length $r$ and $C$ has length at most $r$; such $r$ exists since $f$ is
locally constant.
Then $g$ is also constant on all cylinders of length $r$, so we may take
$B$, $N$, $\Delta$ and $c_n$ as in Section~\ref{sec:formula}, with $N$ the
number of cylinders of length $r-1$.
After passing to the higher block presentation, $g$ still takes values in
$\{0,1\}$, so the coefficients of the polynomials $\Phi$, $\Psi_1$ and
$\Psi_2$ in the proof of the Theorem belong to $\e_\Q$, by
(\ref{eq:coeffsQ}).
Repeating that proof with $\e_\Q$ in place of $\e$, and the field of
fractions of $\e_\Q$ in place of $F$, gives $q_0,\ldots,q_N\in\e_\Q$, not
all identically zero, satisfying (\ref{eq:algrel}) with $D=N$ for
$y(t)=m_{tf}(C)$.
The second part of Lemma~\ref{lem:bounded} then shows that
$\lim_{t\to\infty}m_{tf}(C)$ is an algebraic number of degree at most $N$.
\end{proof}

By Lemma~\ref{lem:extremal}, the limit measure $\lim_{t\to\infty}m_{tf}$
in the Corollary is a convex combination of ergodic
$f$-extremal measures, and the coefficients in this combination need not
be rational (see the examples in 
\cite[\S6.2]{cgu} and
\cite[\S6.1]{garibaldithieullen}).

\end{document}